\documentclass[12pt]{amsart}
\usepackage{amscd}
\usepackage{mathrsfs}
\usepackage{a4wide}
\usepackage{color}
\usepackage{amssymb,amsmath,amsthm,amsfonts,latexsym}
\usepackage[utf8x]{inputenc}

\usepackage{amsmath,amsfonts,amssymb,amsthm} 
\usepackage{tikz}
\usepackage{hyperref}
\usepackage[all]{xy}
%%%%

\newtheorem{theorem}{Theorem}[section]
\newtheorem{corollary}[theorem]{Corollary}
\newtheorem{lemma}[theorem]{Lemma}

\newtheorem{proposition}[theorem]{Proposition}

\newtheorem{question}[theorem]{Question}

\newtheorem*{claim*}{Claim}

\def\leukfrac#1/#2{\leavevmode
               \kern.1em
                \raise.9ex\hbox{\the\scriptfont0 ${}_#1$}
                \hskip -1pt\kern-.1em
                /\kern-.15em\lower.10ex\hbox{\the\scriptfont0 ${}_#2$}}

\makeatletter
\def\diam{\mathop{\operator@font diam}\nolimits}
\makeatother

\title{$\sigma$-lacunary actions of Polish groups}

\author[Greb\'{\i}k]{Jan Greb\'{\i}k${}^{1,}$ ${}^{2, }$ ${}^{3}$}

\address{${}^1$ Mathematics Institute\\
University of Warwick\\
 Coventry \\
 CV4 7AL, United Kingdom}
 \email{jan.grebik@warwick.ac.uk}

\address{${}^2$ Institute of Mathematics of the Czech Academy of Sciences\\
\v Zitn\'a 609/25  \\
 110 00 Praha 1-Nov\'e M\v esto, Czech Republic}

\email{grebikj@gmail.com}

\address{${}^3$ Department of Algebra\\
MFF UK\\
Sokolovsk\' a 83  \\
 186 00 Praha 8, Czech Republic}

\email{grebikj@gmail.com}

\thanks{{The author was supported by the GACR project 17-33849L and RVO:
67985840. The research was conducted during the author's visit at Cornell University that was partially funded by the grant GAUK~900119 of Charles University.}}

\begin{document}

\begin{abstract}
We show that every essentially countable orbit equivalence relation induced by a continuous action of a Polish group on a Polish space is $\sigma$-lacunary.
In combination with \cite{Gao Jackson} we obtain a straightforward proof of the result from \cite{Ding Gao} that every essentially countable equivalence relation that is induced by an action of abelian non-archimedean Polish group is Borel reducible to $\mathbb{E}_0$, i.e., it is essentially hyperfinite.
%Another consequence is that the dichotomy from \cite{Ben Lacunary} characterizes Borel equivalence relations induced by cli Polish groups that are essentially countable.
\end{abstract}

\maketitle

%In this paper we show the equivalence of various concepts that describes how and to what extent an orbit equivalence relation induced by a continuous action of a Polish group on a Polish space is comparable with countable Borel equivalence relations.
%First we  introduce the concepts and describe the motivation.

We say that an equivalence relation $E$ on a Polish space $X$ is {\it Borel reducible} to an equivalence relation $F$ on a Polish space $Y$, and write $E\le_B F$, if there is a Borel map $\psi:X\to Y$ such that 
$$(x,y)\in E \ \Leftrightarrow \ (\psi(x),\psi(y))\in F$$
for every $x,y\in X$.
An equivalence relation $F$ on $Y$ is {\it countable} if $|[y]_F|\le \aleph_0$ for every $y\in Y$.
We follow \cite{Kec CTBL} and say that an equivalence relation $E$ on a Polish space $X$
\begin{itemize}
	\item [(A)] is {\it essentially countable} if there is a countable Borel equivalence relation $F$ on some Polish space $Y$ such that $E\sim_B F$, i.e., $E\le_B F$ and $F\le_B E$,
	\item [(B)] admits a Borel {\it countable complete section} if there is a Borel set $B\subseteq X$ such that $[B]_{E}=X$ and $|B\cap [x]_{E}|\le \aleph_0$ for every $x\in X$. 
\end{itemize}
If we assume that $E$ is a Borel equivalence relation, then (B) $\Rightarrow$ (A) by the Lusin--Novikov Theorem, see~\cite[Theorem~18.10]{Kec}.
%Moreover, if we assume that $E$ is induced by a continuous action of a Polish group, then we have (A) $\Rightarrow$ (B), see~\cite[Lemma~3.7]{Kec Tarski}.

Let $G\curvearrowright X$ be a continuous action of a Polish group $G$ on a Polish space $X$.
We denote as $E^X_G$ the orbit equivalence relation defined as $(x,y)\in E^X_G \ \Leftrightarrow \ (\exists g\in G) \ g\cdot x=y$.
If we have such an action, then we say that $X$ is a Polish $G$-space.
It follows from \cite[Theorem~3.6]{Kec CTBL} that if $E^X_G$ satisfies (A), then $E^X_G$ satisfies (B).
It is natural to ask if we can find a Borel countable complete section with additional properties.
Following \cite{Ben Lacunary} we say that $E^X_G$ is 
\begin{itemize}
	\item [(C)] {\it $\sigma$-lacunary} if there are sequences of Borel sets $\{B_n\}_{n<\omega}$ and $\{V_n\}_{n<\omega}$ such that $\bigcup_{n<\omega} B_n$ is a countable complete section of $E^X_G$, $V_n\subseteq G$ is an open neighbourhood of $1_G$ and $B_n$ is $V_n$-lacunary for every $n\in \mathbb{N}$, i.e., if $g\cdot x=y$ for some $g\in V_n$ and $x,y\in B_n$, then $x=y$.
\end{itemize}
It follows from \cite{Kec Lacunary} that in the case when $G$ is a locally compact Polish group, then (A) and (C) are equivalent.
Main result of this paper is the following statement.

\begin{theorem}\label{th:main1}
Let $G$ be a Polish group, $X$ be a Polish $G$-space and suppose that $E^X_G$ is essentially countable.
Then $E^X_G$ is $\sigma$-lacunary.
\end{theorem}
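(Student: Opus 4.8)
The plan is to combine the two facts cited in the excerpt. By \cite[Theorem~3.6]{Kec CTBL} (or rather the comment following it, together with the Lusin--Novikov theorem), since $E^X_G$ is essentially countable it admits a Borel countable complete section $B\subseteq X$. The key idea is then to reduce to the situation of a \emph{locally compact} group, where \cite{Kec Lacunary} tells us that essentially countable $\Leftrightarrow$ $\sigma$-lacunary. To do this I would like to restrict the action to $B$ and find, inside $G$, a suitable family of neighbourhoods of $1_G$ witnessing lacunarity on pieces of $B$. The first step is a standard change-of-topology / change-of-space argument: one replaces $(X, G\curvearrowright X)$ by a Polish $G$-space in which $B$ becomes (relatively) clopen, so that $B$ itself carries the structure of a Polish $G_B$-space for an appropriate subgroup or quotient acting on it; the orbit equivalence relation is unchanged up to Borel bireducibility and in fact up to the relevant lacunarity notion.

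The heart of the matter is a \textbf{local compactness} reduction. Since $B$ meets each orbit in a countable set, the stabilizer structure on $B$ is ``small'': for $x\in B$, the set of $g$ with $g\cdot x\in B$ partitions (on the left) into countably many cosets of the stabilizer $G_x$. The plan is to exploit this to produce, for each $n$, an open neighbourhood $V_n\ni 1_G$ and a Borel set $B_n$ with $\bigcup_n B_n = B$ such that no two distinct points of $B_n$ are $V_n$-related. One natural route: fix a countable dense subgroup or a countable basis $\{U_k\}$ of open neighbourhoods of $1_G$; using the Becker--Kechris machinery and the Luzin--Novikov uniformization, write $B=\bigcup_n B_n$ where on $B_n$ the ``return-time'' sets $\{g: g\cdot x\in B\}$ avoid a fixed $U_{k(n)}$ apart from the stabilizer; then shrink $U_{k(n)}$ to a symmetric $V_n$ with $V_nV_n^{-1}\subseteq U_{k(n)}$ to kill the stabilizer contribution as well (here one uses that the stabilizer of a point in a countable section can be separated from $1_G$ on a co-countable-index piece, again via Luzin--Novikov). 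This yields (C) directly, so one never actually needs the locally compact case as a black box — but the proof is modeled on it.

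I expect the \textbf{main obstacle} to be dealing with the non-locally-compact group: for general Polish $G$ there is no Haar measure and no ``small compact neighbourhood'' to play with, so the packing/separation arguments of \cite{Kec Lacunary} that produce the $V_n$ do not transfer verbatim. The delicate point is to control the stabilizers and the return-time cosets simultaneously and uniformly in a Borel way; one must be careful that the decomposition $B=\bigcup_n B_n$ is genuinely Borel and that the neighbourhood $V_n$ works for \emph{all} pairs in $B_n$, not merely on a comeager or conull set. The resolution should come from the countability of the section, which forces the relevant coset structure to be discrete after passing to a countable-to-one piece, allowing an exhaustion argument. Once the $B_n$ and $V_n$ are in hand, verifying (C) is immediate from the definitions, so the entire difficulty is concentrated in this Borel separation step.
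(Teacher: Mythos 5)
There is a genuine gap, and it sits exactly where you locate the ``main obstacle.'' Your plan is to start from an arbitrary Borel countable complete section $B$ and to decompose it into pieces $B_n$ on which the return sets $G(x,B)=\{g:g\cdot x\in B\}$ avoid a fixed neighbourhood of $1_G$ apart from the stabilizer. But for a general Polish $G$ and a general countable complete section this decomposition need not exist at all: $G(x,B)$ is a countable union of left cosets of $\operatorname{stab}(x)$, and these cosets can accumulate at $1_G$. Concretely, let $G$ be a Polish group with no compatible locally compact structure acting on $X=G$ by left translation and let $B$ be a countable dense subgroup: then for every $x\in B$ and every neighbourhood $U$ of $1_G$ there are $g\in U\setminus\{1_G\}$ with $g\cdot x\in B$, so no subset of $B$ containing two or more points of an orbit piece near $x$ is $V$-lacunary for any $V$, and $B$ admits no decomposition of the required kind (even though $E^X_G$ here is smooth and a one-point section works). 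So the claimed resolution --- that ``the countability of the section forces the relevant coset structure to be discrete after passing to a countable-to-one piece'' --- is false; the whole problem is to \emph{choose} the section so that this discreteness holds, and Lusin--Novikov alone cannot do that. The preliminary ``change of space'' step is also not available as stated: $B$ is not $G$-invariant, so there is no induced Polish $G_B$-space structure on $B$, and for non--locally compact $G$ there is no locally compact case to reduce to.

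The paper's proof supplies precisely the missing mechanism. It works not from a bare countable section but from the countable-invariants formulation (E): a Borel map $\varphi$ whose fibers define a Borel subequivalence relation $F\subseteq E^X_G$ with only countably many classes per orbit. A Baire category argument \emph{in the group} (Proposition~\ref{pr:1}) --- covering $G$ by the countably many return sets $G(x,\mathfrak{f}_i)$ to the $F$-classes and applying the localization of Baire category twice --- produces a Borel $G$-lg comeager set $C$ on which $G(x,C\cap[x]_F)$ is relatively open in $G(x,C)$; this is the step that manufactures the neighbourhoods $V_n$ and replaces the Haar-measure/packing arguments of the locally compact case. One then uniformizes along the category $\sigma$-ideals on the fibers of $\varphi$ (Proposition~\ref{pr:uni}) to extract a Borel transversal $D$ of $F\upharpoonright C\times C$, and sets $B_n=\{x\in D:V_n\cdot x\cap C\subseteq[x]_F\}$; lacunarity of $B_n$ is then immediate because two $V_n$-related points of $B_n$ lie in the same $F$-class and $D$ is a transversal. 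Your proposal contains neither the subrelation $F$, nor the category argument, nor the passage to the comeager set $C$, and without some substitute for them the Borel separation step you correctly identify as the heart of the proof does not go through.
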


There are some other similar concepts in the literature.
Following \cite{Kec CTBL}, we say that
\begin{itemize}
	\item [(D)] $E^X_G$ is {\it reducible to countable} if there is a countable Borel equivalence relation $F$ on some Polish space $Y$ such that $E^X_G\le_B F$,
	\item[(E)] $E^X_G$ admits {\it countable invariants} if there is a Polish space $Y$ and a Borel map $\varphi:X\to Y$ such that $|\varphi([x]_{E^X_G})|\le \aleph_0$ for every $x\in X$ and $\varphi([x]_{E^X_G})\cap \varphi([y]_{E^X_G})=\emptyset$ whenever $(x,y)\not\in E^X_G$ (see~\cite[Section~7.6.]{Kanovei}).
\end{itemize}
Next we summarize what is known about these concepts.
It is easy to see that (A) $\Rightarrow$ (D) $\Rightarrow$ (E) and (C) $\Rightarrow$ (B).
Moreover, $(A)$ and $(D)$ implies that $E^X_G$ is Borel.
If we suppose that $E^X_G$ is a Borel equivalence relation, then (E) $\Rightarrow$ (D) by \cite[Lemma~7.6.1.]{Kanovei}.
Altogether, combination of Theorem~\ref{th:main1} and the discussion above yields that, if $E^X_G$ is Borel, then all the concepts are equivalent.

In fact, we show that if $E^X_G$ satisfies (E), then $E^X_G$ is Borel and satisfies (C).
As a corollary we get
$$\operatorname{(A)} \Leftrightarrow \operatorname{(D)} \Leftrightarrow  \operatorname{(E)} \Rightarrow  \operatorname{(C)} \Rightarrow  \operatorname{(B)}$$
without assuming that $E^X_G$ is Borel.

\section{Application}\label{sec:Applic}

Let $G$ be a Polish group and $X$ be a Polish $G$-space.
Suppose that $E^X_G$ satisfies (A).
It is natural to ask if there is a connection between properties of $G$ and the position of $E^X_G$ in the Borel reducibility among countable Borel equivalence relations.
For example, we say that $E^X_G$ is {\it essentially hyperfinite} if $E^X_G\sim_B F$ where $F$ is a countable Borel equivalence relation induced by a Borel action of $\mathbb{Z}$.
Variations of the following question appeared in \cite[Conjecture~8.4]{Ding Gao} or \cite[Question~5.7.5]{Kanovei}.

\begin{question}\label{question:main}
Let $E^X_G$ be an essentially countable orbit equivalence relation induced by a continuous action of an {\bf abelian} Polish group $G$ on a Polish space $X$.
Is it true that $E^X_G$ is essentially hyperfinite?
\end{question}

The answer is affirmative in the case when the abelian Polish group $G$ is discrete, see~\cite{Gao Jackson}, non-archimedean, see~\cite{Ding Gao}, and locally compact, see~\cite{Cotton}.

Next result is derived directly from Theorem~\ref{th:main1}, we note that it is a variation on \cite[Theorem~7.3]{Hjorth Kec}.

\begin{theorem}\label{th:app}
Let $G$ be a non-archimedean Polish group that admits two-sided invariant metric and $X$ be a Polish $G$-space.
Suppose that $E^X_G$ is essentially countable.
Then there is a sequence of open normal subgroups $\{N_n\}_{n\in \mathbb{N}}$ and a continuous actions $H_n=G/N_n \curvearrowright X_n$ where $X_n$ is a Polish space such that 
$$E^X_G\le_B \bigoplus_{n\in \mathbb{N}} E^{X_n}_{H_n}.$$ 
\end{theorem}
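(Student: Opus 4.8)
The plan is to feed the $\sigma$-lacunary decomposition supplied by Theorem~\ref{th:main1} into the quotient maps $G\to G/N_n$, exploiting that each quotient $G/N_n$ is countable. It is standard that a non-archimedean Polish group admitting a two-sided invariant metric has a neighbourhood basis at $1_G$ consisting of open normal subgroups, and for such $N$ the quotient $G/N$ is a countable discrete group. So, applying Theorem~\ref{th:main1}, $E^X_G$ is $\sigma$-lacunary, witnessed by Borel sets $\{B_n\}$ and open $V_n\ni 1_G$ with $B:=\bigcup_n B_n$ a Borel countable complete section and each $B_n$ being $V_n$-lacunary; for every $n$ I pick an open normal subgroup $N_n\subseteq V_n$, so that $B_n$ is $N_n$-lacunary, and set $H_n:=G/N_n$.

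First I would sort the $E^X_G$-classes by ``level''. Since $E^X_G$ is essentially countable it is Borel, and from Luzin's theorem on analytic sets with countable sections together with Lusin--Novikov uniformization one obtains a Borel map $X\to B$ selecting, for each $x$, a point of $[x]_{E^X_G}\cap B$; consequently every saturation $[B_m]_{E^X_G}$ is Borel. Hence $n(x):=\min\{n:[x]_{E^X_G}\cap B_n\neq\emptyset\}$ is a Borel, $E^X_G$-invariant function, $X^{(n)}:=\{x:n(x)=n\}$ is a $G$-invariant Borel set, $X=\bigsqcup_n X^{(n)}$, and $C_n:=B_n\cap X^{(n)}$ is a Borel, $N_n$-lacunary, countable complete section of $E^X_G\restriction X^{(n)}$. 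It now suffices to produce, for each $n$, a Polish $H_n$-space $X_n$ together with a Borel map $X^{(n)}\to X_n$ reducing $E^X_G\restriction X^{(n)}$ to $E^{X_n}_{H_n}$.

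For this, fix coset representatives $g_h\in h$ for $h\in H_n$ and set $D_n:=\bigcup_{h\in H_n}g_h\cdot C_n$. Since $C_n$ meets every $G$-orbit in $X^{(n)}$ and $N_n$ is normal, $D_n$ meets every $N_n$-orbit in $X^{(n)}$; since each $g_h\cdot C_n$ is again $N_n$-lacunary, $D_n$ meets each $N_n$-orbit in a countable set, so $D_n$ is a Borel countable complete section of $E^X_{N_n}\restriction X^{(n)}$ (whence this restricted relation is countable Borel) which is covered by the countably many Borel partial transversals $g_h\cdot C_n$. The usual ``assign each class to the first piece it meets'' argument then yields a Borel transversal $T_n$ of $E^X_{N_n}\restriction X^{(n)}$; let $r_n\colon X^{(n)}\to T_n$ be the associated retraction, Borel since its graph has singleton $x$-sections (Luzin again). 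Define an $H_n$-action on $T_n$ by $h\star t:=r_n(g_h\cdot t)$: normality of $N_n$ makes this independent of the choice of $g_h$ and a genuine Borel action, and a standard change of topology turns $(T_n,\star)$ into a Polish space with continuous $H_n$-action, which we take for $X_n$.

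Finally, let $\psi\colon X\to\bigsqcup_n X_n$ be $\psi(x):=r_{n(x)}(x)$; this is Borel. As $n(\cdot)$ is $E^X_G$-invariant, $\psi(x)$ and $\psi(y)$ lie in the same $X_n$ exactly when $n(x)=n(y)$; and when $n(x)=n(y)=m$, using that $H_m$ acts transitively on the set of $N_m$-orbits contained in any single $G$-orbit, a short computation gives $x\mathrel{E^X_G}y\iff h\star r_m(x)=r_m(y)$ for some $h\in H_m\iff\psi(x)\mathrel{E^{X_m}_{H_m}}\psi(y)$. Thus $\psi$ witnesses $E^X_G\le_B\bigoplus_n E^{X_n}_{H_n}$. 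The step I expect to be the main obstacle is showing that $E^X_{N_n}\restriction X^{(n)}$ is smooth with a Borel transversal --- that the $N_n$-orbit structure on the level-$n$ part is Borel-tame --- since this is exactly where the $N_n$-lacunarity of $C_n$ and the countability of $H_n$ combine; the rest is routine bookkeeping with Luzin's theorem, Lusin--Novikov uniformization, and the usual change-of-topology theorem for Borel group actions.
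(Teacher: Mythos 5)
Your proof takes a genuinely different route from the paper's. The paper does not partition $X$ into levels and build a transversal from coset translates; instead it proves an auxiliary lemma (Lemma~\ref{lm:tech}, a variant of an argument of Conley and Dufloux) which greedily enlarges each lacunary section $B_n$, using a countable dense subset of $G$, to a Borel $N_n$-lacunary set $X_n$ with $N_n\cdot X_n=[B_n]_{E^X_G}$. Such a maximal lacunary set is exactly a Borel transversal of the $N_n$-orbit relation on the saturation, and the $H_n$-action is then defined on $X_n$ just as you define it on $T_n$; the paper's lemma has the advantage of working for arbitrary TSI groups, while your construction exploits the countability of $G/N_n$ available in the non-archimedean case and is otherwise more elementary. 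Your level decomposition $X=\bigsqcup_n X^{(n)}$, the well-definedness of $h\star t=r_n(g_h\cdot t)$ via normality, and the final reduction $\psi$ are all correct.

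However, the step you yourself flag as ``the main obstacle'' is a real gap as written. Disjointifying the cover $\{g_h\cdot C_n\}_{h\in H_n}$ into a Borel transversal requires the saturations $N_n\cdot(g_h\cdot C_n)$ to be Borel, and likewise the Borelness of the retraction $r_n$ requires the relation $R=\{(x,c): c\in g_h\cdot C_n,\ x\in N_n\cdot c\}$ to be Borel. A priori $R$ is only analytic ($E^X_{N_n}$ is an orbit equivalence relation of a non-locally-compact group), and although its sections in $x$ are singletons by lacunarity, the projection of an \emph{analytic} set with singleton sections need not be Borel --- so ``Luzin again'' does not apply until you know $R$ itself is Borel. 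This is precisely what the Claim inside Lemma~\ref{lm:tech} establishes: since $E^X_G$ is Borel, the stabilizer map is Borel by Becker--Kechris, one can Borel-select countable dense subsets of stabilizers, and then $\{(y,x):\exists g\in V\ g\cdot x=y\}$ acquires an explicit co-analytic description, hence is Borel; only then does Lusin--Novikov yield Borelness of $V\cdot C$. If you import that claim (with $V=N_n$), your construction of $T_n$ and everything downstream goes through.
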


First we need a variation of an unpublished result of Conley and Dufloux, see~\cite[Theorem~3.11]{Kec CTBL}.
They considered locally compact groups but not necessarily with two-sided invariant metric. 

\begin{lemma}\label{lm:tech}
Let $G$ be a Polish group that admits a two-sided invariant metric and $X$ be a Polish $G$-space such that $E^X_G$ is a Borel equivalence relation.
Let $V\subseteq G$ be an open symmetric conjugacy-invariant neighbourhood of $1_G$ and $B\subseteq X$ be a Borel $V$-lacunary complete section of $E^X_G$, i.e., $g\cdot x=y$ for $g\in V$ and $x,y\in B$ implies that $x=y$.
Then there is $C\supseteq B$ a Borel $V$-lacunary complete section of $E^X_G$ such that $V^2\cdot C=X$.
\end{lemma}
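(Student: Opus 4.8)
The plan is to enlarge $B$ to $C=B\cup D$, where $D\subseteq X$ is a Borel $V$-lacunary set that is disjoint from $V\cdot B$ and satisfies $V^{2}\cdot D\supseteq X\setminus V^{2}\cdot B$. Granting such a $D$, all three required properties are immediate: $C\supseteq B$ trivially; $C$ is $V$-lacunary because $B$ and $D$ are, and since $V=V^{-1}$ a point of $B$ that were $V$-related to a point of $D$ would place that point of $D$ into $V\cdot B$; and $V^{2}\cdot C=(V^{2}\cdot B)\cup(V^{2}\cdot D)\supseteq(V^{2}\cdot B)\cup(X\setminus V^{2}\cdot B)=X$. Two preliminary remarks drive the construction: conjugacy-invariance of $V$ makes the relation ``$(\exists g\in V)\ g\cdot x=y$'' invariant under the $G$-action, which is what makes lacunarity and the covering condition compatible with the orbit structure; and, since $E^{X}_{G}$ is Borel and $B$ is $V$-lacunary, $B$ meets every orbit in a countable set (the orbit is a continuous open image of $G$, so the quotient metric makes a $V$-lacunary slice uniformly discrete, hence countable), so $E^{X}_{G}\restriction B$ is a countable Borel equivalence relation and one may enumerate $[x]_{E^{X}_{G}}\cap B$ by Borel functions via Lusin--Novikov (equivalently Feldman--Moore).

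To produce $D$, first fix, using the two-sided invariant metric $d_{G}$, an open symmetric conjugacy-invariant neighbourhood $V'$ of $1_{G}$ with $(V')^{3}\subseteq V$ (a small $d_{G}$-ball works and is automatically symmetric and conjugacy-invariant). Since $B$ is $V$-lacunary it is $(V')^{2}$-lacunary, so $B\cap(V'\cdot x)$ has at most one element for every $x$; combining this with the enumeration of $[x]_{E^{X}_{G}}\cap B$ above one shows that $V'\cdot B$, and more generally $V^{n}\cdot B$ for the finitely many relevant $n$, are Borel, so in particular $S:=X\setminus V^{2}\cdot B$ is Borel and $S\subseteq X\setminus V\cdot B$. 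Next fix a countable basis $\{U_{n}\}_{n<\omega}$ of $X$ that is \emph{$V'$-small}, i.e.\ any two points of a common $U_{n}$ lying in one orbit are $V'$-related (such a basis exists by continuity of the action), and let $D$ be a maximal $V'$-lacunary Borel subset of $S$, obtained by the usual marker argument that runs through the $U_{n}$ and, at stage $n$, adds a Borel-ly chosen point of $U_{n}\cap S$ that is not $V'$-related to anything selected so far. Then $D$ is Borel and $V'$-lacunary, hence $V$-lacunary; $D\subseteq S\subseteq X\setminus V\cdot B$; and by maximality every point of $S$ is $V'$-related (or equal) to a point of $D$, so $S\subseteq V'\cdot D\subseteq V^{2}\cdot D$. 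Thus $D$ has the three properties demanded above, and $C=B\cup D$ is the required set.

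The main obstacle is exactly the Borelness bookkeeping in the second paragraph: that $V^{2}\cdot B$ is Borel, and that a genuinely $V'$-small basis of $X$ exists. This is where the hypotheses that $E^{X}_{G}$ is Borel and that $G$ carries a two-sided invariant metric are really used. After passing to a finer Polish topology on $X$ provided by the Becker--Kechris theory (keeping the action continuous and making the orbits regular enough), an Effros-type analysis gives that the orbit map $g\mapsto g\cdot x$ is open onto its image; hence sufficiently small neighbourhoods of a point contain only orbit-mates reachable by elements of $V'$ — yielding the $V'$-small basis — and the ``return-time'' sets witnessing membership in the various $V^{n}\cdot B$ become Borel (using the $(V')^{2}$-lacunarity of $B$ to keep those sets single-valued along $B$). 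The small \emph{conjugation-invariant} neighbourhoods coming from the two-sided invariant metric are what allow all these auxiliary neighbourhoods to be taken conjugacy-invariant, so that the whole construction stays inside the conjugacy-invariant $V$; see the locally compact case in \cite[Theorem~3.11]{Kec CTBL} for the template. Once this Effros/uniformization input is in place, the marker construction and the verifications above are routine.
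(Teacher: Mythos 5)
Your reduction to finding a Borel $V$-lacunary set $D\subseteq X\setminus V^2\cdot B$ with $V^2\cdot D\supseteq X\setminus V^2\cdot B$ and $D\cap V\cdot B=\emptyset$ is sound, and so is the countability of $B\cap[x]_{E^X_G}$ (by separability of $G$). The gap is in the construction of $D$. First, the ``$V'$-small basis'' you posit does not exist: continuity of the action gives no control in that direction. If a single orbit is dense (e.g.\ an irrational rotation, where $G=\mathbb{Z}$ and $V'$ may be $\{1_G\}$), every basic open set contains orbit-mates that are not $V'$-related, and no refinement of the topology fixes this uniformly; even where an Effros-type argument applies, openness of the orbit map is a pointwise statement (each $x$ has \emph{some} neighbourhood $U_x$ with $U_x\cap[x]_{E^X_G}\subseteq V'\cdot x$) and does not yield a single basic set all of whose orbit-mates are pairwise $V'$-related. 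Second, even granting such a basis, the greedy step ``add a Borel-ly chosen point of $U_n\cap S$ per orbit'' requires a Borel uniformization of a set whose orbit-sections $U_n\cap S\cap[x]_{E^X_G}$ need not be countable or $\sigma$-compact, so Lusin--Novikov does not apply and no Borel selection is available in general. In effect you are trying to build a maximal lacunary section from scratch, which is essentially as hard as the problem you started with.

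The paper avoids both obstacles by never leaving the countable section it already has: it fixes a countable dense $\{g_i\}_{i\in\mathbb{N}}\subseteq G$ and sets $C_0=B$, $C_{i+1}=C_i\cup\bigl(g_i\cdot C_i\setminus V\cdot C_i\bigr)$, $C=\bigcup_i C_i$. All new points are translates of points of $B$, so every $C_i$ remains a countable section; the only nontrivial descriptive-set-theoretic input is that $V\cdot C_i$ is Borel, which follows from Lusin--Novikov once one checks that $R_V=\{(g\cdot x,x):x\in X,\ g\in V\}$ is Borel (this is where Borelness of $E^X_G$ and a Borel selection of dense subsets of the stabilizers enter). Density of $\{g_i\}$ gives $V^2\cdot C=X$, and conjugacy-invariance of $V$ gives $g_i^{-1}\cdot V\cdot g_i=V$, hence lacunarity of each $g_i\cdot C_i$ and of $C$. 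If you want to salvage your outline, replace the marker construction of $D$ by this translation scheme; the first paragraph of your argument then becomes unnecessary, since the paper's $C$ satisfies all three conclusions directly.
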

\begin{proof}
Fix some countable dense subset $\{g_i\}_{i\in \mathbb{N}}\subseteq G$.
Let $C_0=B$ and define inductively $C_{i+1}=C_i\cup (g_i\cdot C_i\setminus V\cdot C_i)$.
We claim that $C=\bigcup_{i\in \mathbb{N}}C_i$ works as required.

First note that $C_i$ is a countable section for every $i\in \mathbb{N}$.
We show by induction that $C_i$ is Borel for every $i\in \mathbb{N}$.
If $i=0$, then it follows from the assumption on $B$.
Suppose that $C_i$ is Borel.
\begin{claim*}
The set $T_i=\{(g\cdot x,x)\in X\times X:x\in C_i \ \& \ g\in V\}$ is Borel .
\end{claim*}
\begin{proof}
The assumption that $E^X_G$ is Borel together with \cite[Theorem~7.1.2]{Becker Kec} gives that the assignment $y\mapsto \operatorname{stab}(y)=\{g\in G:g\cdot y=y\}$ is Borel.
Note that $\operatorname{stab}(y)$ is non-empty closed subset of $G$ and by \cite[Theorem~12.13]{Kec} there is a Borel map $y\mapsto (g_{j,y})_{j\in \mathbb{N}}$ such that $(g_{j,y})_{j\in \mathbb{N}}$ is dense subset of $\operatorname{stab}(y)$ for every $y\in X$.
We claim that the relation
$$R_V=\{(y,x)\in X\times X: \exists g\in V \ g\cdot x=y\}=\{(g\cdot x,x)\in X\times X: x\in X \ \& \ g\in V\}$$
is Borel.
It is clearly analytic by the definition.
We show that the complement is analytic as well, we have
$$(y,x)\not \in R_V \ \Leftrightarrow \ (y,x)\not \in E^X_G \ \vee (\exists h\in G \ h\cdot x=y \ \wedge \ \forall j\in \mathbb{N} \ g_{j,y}\cdot h\not \in V).$$
Finally, note that $T_i=X\times C_i \cap R_V$.
\end{proof}
It is easy to see that $T_i$ has countable vertical sections.
This is because $(T_i)_y=\{x\in X:(y,x)\in T_i\}\subseteq C_i\cap [y]_{E^X_G}$ for every $y\in X$ and we know that $C_i$ is a countable section.
By Lusin--Novikov Theorem~\cite[Theorem~18.10]{Kec} we have that $V\cdot C_i$, which is equal to the projection of $T_i$ to the first coordinate, is a Borel set and so is the set $C_{i+1}$.
This gives immediately that $C$ is Borel.

Suppose that $x\in X$.
Then there is $y\in C_0$, $h\in G$ and $i\in \mathbb{N}$ such that $h\cdot y=x$ and $h^{-1}\cdot g_i\in V$.
Let $z=g_i\cdot y$.
Then either $z\in C_{i+1}$ and therefore $x\in V\cdot z$, or there is $z_0\in C_i$ such that $z\in V\cdot z_0$ and then we have $x\in V^2\cdot z_0$.
This shows that $X=V^2\cdot C$.

It remains to show that $C$ is $V$-lacunary.
We show by induction that $C_i$ is $V$-lacunary for every $i\in \mathbb{N}$.
It clearly holds for $i=0$.
Let $x,y\in C_{i+1}$ and suppose that $y\in V\cdot x$.
If $x,y\not\in C_i$, then there is $x_0,y_0\in C_i$ such that $g_i\cdot x_0=x$ and $g_i\cdot y_0=y$.
Then we have $y_0\in g^{-1}_i\cdot V\cdot g_i \cdot x_0=V\cdot x_0$ because $V$ is conjugacy invariant and therefore $x=y$ by the inductive assumption.
If $x\in C_i$, then $y\in C_i$ by the definition of $C_{i+1}$.
Again, the inductive assumption gives $x=y$ and that finishes the proof.
\end{proof}

\begin{proof}[Proof of Theorem~\ref{th:app}]
Using Theorem~\ref{th:main1} we get sequences $\{B_n\}_{n\in \mathbb{N}}$ and $\{V_n\}_{n\in \mathbb{N}}$ where $B_n$ is $V_n$-lacunary Borel section and $V_n$ is an open neighbourhood of identity.
By the assumption on $G$ we find $N_n\subseteq V_n$ an open normal subgroup and by Lemma~\ref{lm:tech} $X_n\supseteq B_n$ an $N_n$-lacunary Borel section such that $N^2_n\cdot X_n=N_n\cdot X_n=[B_n]_{E^X_G}$ for every $n\in \mathbb{N}$.

For each $n\in \mathbb{N}$, $(gN_n)\in H_n=G/N_n$ and $x\in X_n$ define $(gN_n)\star x$ to be the unique element of $X_n$ in $g\cdot N_n\cdot x$.
It follows from the maximality and $N_n$-lacunarity of $X_n$ that this is a well-defined map and it can be easily verified that it is an action $H_n\curvearrowright X_n$.
Moreover, it follows from Lusin--Novikov Theorem~\cite[Theorem~18.10]{Kec} that the action is Borel for every $n\in \mathbb{N}$.
Another use of Lusin--Novikov Theorem~\cite[Theorem~18.10]{Kec} gives the desired reduction.
\end{proof} 

\begin{corollary}\cite{Ding Gao}
Let $G$ be an abelian non-archimedean Polish group and $X$ be a Polish $G$-space such that $E^X_G$ is essentially countable.
Then $E^X_G$ is essentially hyperfinite.
\end{corollary}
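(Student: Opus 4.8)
The plan is to feed Theorem~\ref{th:app} into the theorem of Gao and Jackson \cite{Gao Jackson}. First, note that an abelian Polish group automatically admits a two-sided invariant metric: by the Birkhoff--Kakutani theorem it carries a left-invariant compatible metric, and for an abelian group left-invariance and right-invariance coincide. Hence $G$ satisfies the hypotheses of Theorem~\ref{th:app}, and we obtain open normal subgroups $\{N_n\}_{n\in\mathbb{N}}$, Polish spaces $X_n$, and continuous actions $H_n=G/N_n\curvearrowright X_n$ with
$$E^X_G\le_B\bigoplus_{n\in\mathbb{N}}E^{X_n}_{H_n},$$
where moreover (as recorded in the proof of Theorem~\ref{th:app}) each action $H_n\curvearrowright X_n$ is Borel.

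Next I would analyse the quotient groups $H_n$. Since $G$ is non-archimedean, each $N_n$ is an open normal subgroup, and an open subgroup of a separable group has only countably many cosets; thus $H_n=G/N_n$ is a countable discrete group, and it is abelian because $G$ is. Consequently each $E^{X_n}_{H_n}$ is the orbit equivalence relation of a Borel action of a countable abelian group, hence a countable Borel equivalence relation, and by \cite{Gao Jackson} it is hyperfinite.

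It remains to assemble the pieces. A countable direct sum of hyperfinite Borel equivalence relations is again hyperfinite: each summand is Borel reducible to $\mathbb{E}_0$, and the $\omega$-indexed sum of copies of $\mathbb{E}_0$ is itself Borel reducible to $\mathbb{E}_0$ (code the index of the summand into a leading block). Therefore $\bigoplus_{n}E^{X_n}_{H_n}\le_B\mathbb{E}_0$, and so $E^X_G\le_B\mathbb{E}_0$. Finally, since $E^X_G$ is essentially countable there is a countable Borel equivalence relation $F$ with $F\sim_B E^X_G$, so $F\le_B E^X_G\le_B\mathbb{E}_0$; a countable Borel equivalence relation that is Borel reducible to $\mathbb{E}_0$ is hyperfinite, and a hyperfinite Borel equivalence relation is induced by a Borel action of $\mathbb{Z}$. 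Hence $F$, and with it $E^X_G\sim_B F$, witnesses that $E^X_G$ is essentially hyperfinite.

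As for difficulty, there is no deep obstacle once Theorem~\ref{th:app} and \cite{Gao Jackson} are available; the only points requiring care are the verification that the quotients $G/N_n$ are genuinely countable and discrete (this is where non-archimedeanity together with separability of $G$ enters) and that the induced quotient actions are Borel, and the routine bookkeeping that converts ``$E^X_G\le_B$ hyperfinite'' plus essential countability into ``essentially hyperfinite'' via the standard fact that a countable Borel equivalence relation below $\mathbb{E}_0$ is hyperfinite.
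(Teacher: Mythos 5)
Your proof is correct and follows exactly the route the paper takes: the paper's own proof is the one-line remark that the corollary ``is a combination of Theorem~\ref{th:app} and \cite[Corollary~8.2]{Gao Jackson}'', and you have simply supplied the (correct) details of that combination --- abelian implies two-sided invariant metric, openness of $N_n$ plus separability gives countable discrete abelian quotients $H_n$, Gao--Jackson gives hyperfiniteness of each $E^{X_n}_{H_n}$, and the standard closure properties of hyperfiniteness under countable direct sums and under Borel reducibility among countable Borel equivalence relations finish the argument.
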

\begin{proof}
This is a combination of Theorem~\ref{th:app} and \cite[Corollary~8.2]{Gao Jackson}.
\end{proof}

\section{Proof of Theorem~\ref{th:main1}}\label{sec:main}

Let $X$ be a Polish space and $F$ an equivalence relation on $X$.
We denote as $[x]_F$ the $F$-equivalence class of $x\in X$.
Let $G$ be a Polish group that acts continuously on a Polish space $X$ and $E^X_G$ be the corresponding orbit equivalence relation on $X$.
We denote the $\sigma$-ideal of meager subsets of $G$ as $\mathcal{M}_G$.
For $A\subseteq X$ we define $G(x,A)=\{g\in G:g\cdot x\in A\}$ and $E^A_G=E^X_G\upharpoonright A\times A$.

We say that $C\subseteq X$ is a {\it $G$-lg ($G$-locally globally) comeager} set if $G\setminus G(x,C)\in \mathcal{M}_G$ for every $x\in X$.
Using the category quantifier $\forall^*$, for comeager many, this can be equivalently stated as
$$\forall x\in X  \  \forall^* g\in G \ (g\cdot x\in C).$$
Note that the collection of $G$-lg comeager sets is closed under supersets and countable intersections.
If $G$ is countable, then the only $G$-lg comeager set is $X$.
Even though we do not need it here, we remark that it follows from \cite[Theorem~8.41]{Kec} that if $C\subseteq X$ is a Borel $G$-lg comeager set, then $C$ is comeager in $X$.
This might serve as an explanation for the word ``globally'' in the definition.
More generally, a Borel set $C\subseteq X$ is $G$-lg comeager if and only if it is comeager in every finer Polish topology on $X$ such that the action of $G$ is continuous.

Next we collect the technical statements that we need in the proof.

\begin{proposition}\label{pr:0}
Let $C\subseteq X$ be a Borel $G$-lg comeager set.
Then $E^C_G\sim_B E^X_G$.
\end{proposition}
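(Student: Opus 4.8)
The plan is to prove $E^C_G\sim_B E^X_G$ by exhibiting Borel reductions in both directions. The easy direction is $E^C_G\le_B E^X_G$: since $C\subseteq X$, the inclusion map $C\hookrightarrow X$ is a Borel reduction, because $(x,y)\in E^C_G$ iff $(x,y)\in E^X_G$ for $x,y\in C$ by definition of the restriction. So the real content is the reverse reduction $E^X_G\le_B E^C_G$, which amounts to finding a Borel map $\psi:X\to C$ with $(x,y)\in E^X_G\Leftrightarrow(\psi(x),\psi(y))\in E^C_G$; since $C$ is $E^X_G$-... well, $C$ need not be invariant, but the point is that $G$-lg comeagerness guarantees that every orbit meets $C$, indeed meets it "comeagerly", so the hope is that $C$ is in fact a complete section and we can uniformize the correspondence $x\mapsto$ (some canonical element of $[x]_{E^X_G}\cap C$).

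First I would observe that $C$ is a Borel complete section: for each $x\in X$, $G\setminus G(x,C)\in\mathcal{M}_G$, and since $G$ is Polish (hence nonmeager in itself), $G(x,C)\neq\emptyset$, i.e. there is $g\in G$ with $g\cdot x\in C$, so $[x]_{E^X_G}\cap C\neq\emptyset$ and $[C]_{E^X_G}=X$. Now I want a Borel selector. The natural approach is to use the Becker--Kechris theory of Polish group actions: the map $x\mapsto G(x,C)$ has values that are comeager subsets of $G$ (in particular nonmeager Borel sets); one can apply a Jankov--von Neumann / category-uniformization argument (the "$\forall^*$ quantifier is Borel on $\Sigma^1_1$ sets", \cite[Theorem 16.1]{Kec}, combined with the local definability of the action) to select, in a Borel-in-$x$ way, a group element $g(x)$ with $g(x)\cdot x\in C$; then set $\psi(x)=g(x)\cdot x\in C$. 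This $\psi$ is Borel and orbit-preserving, and since $\psi(x)\in[x]_{E^X_G}$ always, we get $(x,y)\in E^X_G$ iff $(\psi(x),\psi(y))\in E^X_G$ iff $(\psi(x),\psi(y))\in E^C_G$ (the last because $\psi(x),\psi(y)\in C$). Hence $E^X_G\le_B E^C_G$.

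Alternatively — and this is probably the cleaner route given the tools available in the paper — I would invoke the standard fact from the theory of Polish group actions that an orbit equivalence relation restricted to a Borel complete section is Borel bireducible with the whole relation, once a Borel selector into the section exists; the existence of such a selector for a Borel $G$-lg comeager $C$ follows from the Becker--Kechris local-orbit analysis together with category uniformization. The map in the nontrivial direction can also be described symmetrically: pick a countable dense $\{h_i\}\subseteq G$, and for $x\in X$ let $\psi(x)=h_{i(x)}\cdot x$ where $i(x)$ is the least $i$ such that $\{g\in G: (gh_i)\cdot x\in C\}$ is comeager in a neighbourhood of $1_G$ — the "$\forall^*$" conditions are arithmetical over the Borel set $C$ and the continuous action, so $i(x)$ is Borel in $x$, and $G$-lg comeagerness of $C$ guarantees such $i$ exists.

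The main obstacle is verifying Borelness of the selector $\psi$, i.e. checking that the category-quantifier uniformization genuinely applies in this setting: one must confirm that the set $\{(x,g): g\cdot x\in C\}$ is Borel (immediate since $C$ is Borel and the action continuous) and that the "for comeager many $g$ in some basic open set" predicate over it is Borel in $x$, which is exactly \cite[Theorem 16.1]{Kec} (the Kuratowski--Ulam / $\forall^*$ is Borel-definable theorem) — after that, extracting an actual witnessing $g(x)$ Borel-in-$x$ is a routine Jankov--von Neumann selection applied to the Borel set $\{(x,g): g\cdot x\in C \text{ and } g \text{ lies in the selected basic open piece}\}$, which has nonempty sections by $G$-lg comeagerness. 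Everything else — that $\psi$ lands in $C$, that it preserves orbits, and that this yields both reductions — is immediate bookkeeping.
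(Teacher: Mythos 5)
Your proposal is correct and follows essentially the same route as the paper: the paper defines $D=\{(x,g)\in X\times G: g\cdot x\in C\}$, notes that its vertical sections are nonmeager (indeed comeager), applies the category/large-section uniformization theorem \cite[Theorem~18.6]{Kec} to obtain a Borel $f:X\to G$ with $f(x)\cdot x\in C$, and takes $x\mapsto f(x)\cdot x$ as the reduction $E^X_G\le_B E^C_G$ (the inclusion giving the other direction is left implicit). Two small cautions: the Borel selector must come from the large-section uniformization theorem rather than Jankov--von Neumann (the latter only yields a $\sigma({\bf \Sigma}^1_1)$-measurable, not Borel, uniformizing function), and the explicit ``least $i$'' formula in your aside does not work as stated --- since $G(x,C)h_i^{-1}$ is comeager in $G$ for every $i$, your condition holds already for $i=0$, and comeagerness of $\{g:(gh_i)\cdot x\in C\}$ near $1_G$ says nothing about the single point $h_i\cdot x$ lying in $C$ --- so that variant should be discarded in favour of the uniformization-theorem argument.
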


\begin{proposition}\label{pr:1}
Let $F\subseteq E^X_G$ be a Borel equivalence relation on $X$ such that each $E^X_G$-class contains at most countably many $F$-classes.
Then there is a Borel $G$-lg comeager set $C\subseteq X$ such that $G(x,C\cap [x]_F)$ is relatively open in $G(x,C)$ for every $x\in C$, i.e., for every $x\in C$ there is $V\subseteq G$ open neighbourhood of $1_G$ such that $V\cdot x\cap C\subseteq [x]_F\cap C$.
\end{proposition}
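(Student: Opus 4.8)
\emph{The plan.} I would take $C$ to be the set of all $x\in X$ such that $G(x,[x]_F)$ is comeager in some open neighbourhood of $1_G$, i.e.\ such that $\forall^* g\in V\ (x,g\cdot x)\in F$ for some open $V\ni 1_G$. Then I would check three things in turn: that $C$ is Borel, that $C$ is $G$-lg comeager, and that for every $x\in C$ the desired conclusion holds with $V$ any such neighbourhood of $1_G$.

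\emph{Borelness of $C$.} Fixing a countable basis $\{U_n\}$ of $G$ and using that comeagerness in an open set passes to open subsets, $x\in C$ iff there is $n$ with $1_G\in U_n$ and $\forall^* g\in U_n\ (x,g\cdot x)\in F$. Through the continuous map $\Psi\colon X\times G\to X\times X$, $\Psi(x,g)=(x,g\cdot x)$, this is the category quantifier $\forall^* g\in U_n$ applied to the Borel set $\Psi^{-1}(F)$ over the Polish space $U_n$; equivalently it is a Vaught transform of the Borel set $F\subseteq X\times X$ for the action of $G$ on the second coordinate, hence Borel (see~\cite{Becker Kec}, or the Kuratowski--Ulam calculus in~\cite{Kec}). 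So $C$ is a countable union of Borel sets.

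\emph{$C$ is $G$-lg comeager.} Fix $y\in X$. Since $F\subseteq E^X_G$, the orbit $[y]_{E^X_G}$ decomposes into countably many $F$-classes $\{A_i\}_{i\in I}$, and the preimages $P_i=\{g:g\cdot y\in A_i\}$ form a countable Borel partition of $G$. Letting $W_i$ be the union of all open sets in which $P_i$ is comeager, $P_i\setminus W_i$ is meager by the Baire property, so $M_y=\bigcup_i(P_i\setminus W_i)$ is meager. For $g\notin M_y$, $g$ lies in the unique $P_{i_0}$ with $g\cdot y\in A_{i_0}$ and also in $W_{i_0}$, so $P_{i_0}$ is comeager in some open $U\ni g$; as $G(g\cdot y,[g\cdot y]_F)=P_{i_0}g^{-1}$, this set is comeager in the neighbourhood $Ug^{-1}$ of $1_G$, whence $g\cdot y\in C$. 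Thus $\{g:g\cdot y\notin C\}\subseteq M_y$ is meager, which is exactly $G$-lg comeagerness.

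\emph{The local property, and where the difficulty sits.} Let $x\in C$ and pick open $V\ni 1_G$ with $G(x,[x]_F)$ comeager in $V$. If $g\in V$ and $g\cdot x\in C$, pick open $U'\ni 1_G$ with $G(g\cdot x,[g\cdot x]_F)$ comeager in $U'$; then $G(x,[g\cdot x]_F)=G(g\cdot x,[g\cdot x]_F)\cdot g$ is comeager in $U'g$, a neighbourhood of $g$. On the nonempty open set $W=V\cap U'g$ both $G(x,[x]_F)$ and $G(x,[g\cdot x]_F)$ are comeager, but two disjoint sets cannot both be comeager in a nonempty Baire open set, so $[x]_F=[g\cdot x]_F$, i.e.\ $g\cdot x\in[x]_F$; hence $V\cdot x\cap C\subseteq[x]_F\cap C$. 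The heart of the matter is that $C$ must be $G$-lg comeager — so one may not simply discard the ``small'' $F$-classes from an orbit — while simultaneously satisfying a genuinely first-category-free local condition; the two requirements are reconciled precisely by the observation that a generic point of a countable Borel partition of $G$ sees one piece as locally comeager, combined with the elementary incompatibility of disjoint comeager sets, everything being transported back to $1_G$ by right translation.
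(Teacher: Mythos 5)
Your proof is correct and follows essentially the same route as the paper: you define the same set $C$ via the category quantifier, get Borelness from the same Vaught-transform/Kuratowski--Ulam fact, and prove $G$-lg comeagerness by the same decomposition of an orbit into countably many $F$-classes pulled back to a countable partition of $G$ with the Baire property. The only differences are cosmetic rearrangements of the category bookkeeping --- you argue comeagerness directly via the largest open set in which each piece is comeager where the paper argues by contradiction with the localization theorem, and for the local property you use the incompatibility of disjoint comeager sets where the paper uses a nested basis with $V_{n+1}V_{n+1}\subseteq V_n$ and transitivity of $F$ --- and both versions are sound.
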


\begin{proposition}\label{pr:2}
Let $F\subseteq E^X_G$ be a Borel equivalence relation on $X$ such that each $E^X_G$-class contains at most countably many $F$-classes.
Then $E^X_G$ is Borel.
\end{proposition}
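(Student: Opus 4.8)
The plan is to deduce Proposition~\ref{pr:2} from the two preceding propositions together with the fact that the category quantifier $\exists^*$ sends Borel relations to Borel sets. First I would apply Proposition~\ref{pr:1} to obtain a Borel $G$-lg comeager set $C\subseteq X$ with the local property: for every $x\in C$ there is an open neighbourhood $V_x$ of $1_G$ with $V_x\cdot x\cap C\subseteq[x]_F\cap C$. By Proposition~\ref{pr:0} there is a Borel map $\psi$ with $x\mathrel{E^X_G}y\Leftrightarrow\psi(x)\mathrel{E^C_G}\psi(y)$, so $E^X_G=(\psi\times\psi)^{-1}(E^C_G)$ and it suffices to show that $E^C_G$ is Borel.

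The heart of the proof would be the chain of equivalences, for $x,y\in C$,
$$x\mathrel{E^C_G}y\ \Longleftrightarrow\ \exists g\in G\,(g\cdot x\in C\ \wedge\ g\cdot x\mathrel{F}y)\ \Longleftrightarrow\ \exists^* g\in G\,(g\cdot x\in C\ \wedge\ g\cdot x\mathrel{F}y).$$
The first equivalence uses only $F\subseteq E^X_G$: if $g\cdot x\in C$ and $g\cdot x\mathrel{F}y$ then $x\mathrel{E^X_G}g\cdot x\mathrel{E^X_G}y$, and conversely any $g$ with $g\cdot x=y$ witnesses the middle condition. In the second equivalence only ``$\Rightarrow$'' requires an argument. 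Set $W=\{g\in G:g\cdot x\in C\ \wedge\ g\cdot x\mathrel{F}y\}=G(x,C\cap[y]_F)$. If $g\in W$, then $x':=g\cdot x\in C$ and $[x']_F=[y]_F$, so the local property at $x'$ gives $(V_{x'}g)\cdot x\cap C=V_{x'}\cdot x'\cap C\subseteq[y]_F\cap C$, hence $V_{x'}g\cap G(x,C)\subseteq W$; thus $W$ is relatively open in $G(x,C)$. Since $W$ is nonempty (it contains any $g$ with $g\cdot x=y$), it contains a set $O\cap G(x,C)$ with $\emptyset\neq O\subseteq G$ open, and because $G(x,C)$ is comeager this set is non-meager; therefore $\exists^* g\in G\,(g\cdot x\in C\wedge g\cdot x\mathrel{F}y)$.

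To conclude, note that $\{(g,x,y)\in G\times C\times C:g\cdot x\in C\ \wedge\ (g\cdot x,y)\in F\}$ is Borel, being the preimage of the Borel sets $C$ and $F$ under the continuous maps $(g,x)\mapsto g\cdot x$ and $(g,x,y)\mapsto(g\cdot x,y)$. By \cite[Theorem~8.41]{Kec} the set of pairs $(x,y)\in C\times C$ whose section over $G$ is non-meager is Borel, and by the displayed equivalence this set equals $E^C_G$; hence $E^C_G$, and therefore $E^X_G$, is Borel. (One could also avoid Proposition~\ref{pr:0} and apply the category quantifier twice, using that $x\mathrel{E^X_G}y\Leftrightarrow\exists^* g\,\exists^* h\,(g\cdot x\in C\wedge h\cdot y\in C\wedge g\cdot x\mathrel{E^C_G}h\cdot y)$, which holds because $C$ is $G$-lg comeager.)

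The step I expect to be delicate is the claim that $W$ is relatively open in $G(x,C)$ — this is exactly what Proposition~\ref{pr:1} provides, and it is what allows $\exists g$ to be upgraded to $\exists^* g$. Without the local property $W$ could be a nonempty meager Borel set, and then the Borel set produced by the category quantifier would be a proper subset of $E^C_G$ rather than $E^C_G$ itself, so the argument would break down; the remaining steps are routine.
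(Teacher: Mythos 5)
Your proof is correct and follows essentially the same route as the paper: obtain $C$ from Proposition~\ref{pr:1}, use its local openness property to upgrade $\exists g$ to the category quantifier $\exists^* g$ over a Borel relation built from $F$, conclude that $E^C_G$ is Borel, and pull back via the reduction from Proposition~\ref{pr:0} (the paper uses a two-variable quantifier $\exists^*(a,b)\in G\times G$ on $(a\cdot x,b\cdot y)\in F$, but this is only a cosmetic difference from your one-variable version). One small correction: the Borelness of the set defined by the category quantifier is \cite[Theorem~16.1]{Kec} (Montgomery--Novikov), not Theorem~8.41, which is Kuratowski--Ulam.
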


\begin{proposition}\cite[Theorem~18.6]{Kec}\cite[Theorem~18.6*]{Kec2}\cite[Proof of Lemma~3.7]{Kec Tarski}\label{pr:uni}
Let $Y,X$ be standard Borel spaces and $P\subseteq Y\times X$ be Borel with $A=\operatorname{proj}_Y(P)$.
Let $y\in A\mapsto I_y$ be a map assigning to each $y\in A$ a $\sigma$-ideal of subsets of $P_y$ such that:
\begin{enumerate}
	\item [(i)] For each Borel $R\subseteq P$, there is a ${\bf \Sigma}^1_1$ set $S\subseteq Y$ and a ${\bf \Pi}^1_1$ set $T\subseteq Y$ such that 
	$$y\in A \ \Rightarrow \ [R_y\in I_y \ \Leftrightarrow \ y\in S \ \Leftrightarrow \ y\in T],$$
	\item [(ii)] $y\in A \Rightarrow \ P_y\not\in I_y$.
\end{enumerate}
Then there is a Borel uniformization of $P$ and, in particular, $A$ is Borel.
\end{proposition}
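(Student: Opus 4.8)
The plan is to combine a change of topology with a fibrewise bisection of von Neumann type, using hypothesis~(ii) as the engine of the recursion and hypothesis~(i), together with the $\mathbf{\Sigma}^1_1$-separation theorem, to keep the recursion Borel and to recover the Borelness of $A$ at the end.

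First I would pass to a convenient topology. Fix Polish topologies on $X$ and $Y$ realizing their Borel structures and, by a standard change of topology (see \cite[Section~13]{Kec}), a finer Polish topology on $Y\times X$, with the same Borel sets, in which $P$ is clopen; then $P$ with the subspace topology is a Polish space whose Borel sets are precisely the ambient Borel subsets of $P$. Fix a complete compatible metric on $P$ and a countable basis $\{U_n\}_{n\in\mathbb{N}}$. Since the new topology refines the product one, the projection $\pi=\operatorname{proj}_Y\upharpoonright P\colon P\to Y$ is continuous, so each fibre $P_y=\pi^{-1}(y)$ is closed in $P$; moreover $P_y\neq\emptyset$ exactly when $y\in A$, and $P_y\notin I_y$ for $y\in A$ by~(ii).

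The heart of the argument is a recursive construction of \emph{total} Borel maps $n_k\colon Y\to\mathbb{N}$ ($k\in\mathbb{N}$) such that, writing $G_0(y)=P$ and $G_{k+1}(y)=G_k(y)\cap\overline{U_{n_k(y)}}$ (closures in $P$), one has for every $y\in A$ that $\diam\overline{U_{n_k(y)}}<2^{-k}$ and $P_y\cap G_{k+1}(y)\notin I_y$. Suppose $n_0,\dots,n_{k-1}$ are already defined; then for each $n$ the set
$$R^{(n)}=\bigl\{(y,p)\in P:\ p\in\overline{U_{n_0(y)}}\cap\cdots\cap\overline{U_{n_{k-1}(y)}}\cap\overline{U_n}\bigr\}$$
is an honest Borel subset of $P$, with $R^{(n)}_y=P_y\cap G_k(y)\cap\overline{U_n}$. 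Here is the crucial point: applying~(i) to $R^{(n)}$ shows only that the two disjoint sets $\{y\in A:R^{(n)}_y\in I_y\}$ and $\{y\in A:R^{(n)}_y\notin I_y\}$ are each $\mathbf{\Sigma}^1_1$ (they equal $A\cap S$ and $A\cap T^{c}$ respectively, intersections of analytic sets), while $A$ is a priori only analytic. By the $\mathbf{\Sigma}^1_1$-separation theorem \cite[Theorem~14.7]{Kec} I pick a Borel set $D^{(n)}\subseteq Y$ separating them, so that $y\in D^{(n)}\Leftrightarrow R^{(n)}_y\notin I_y$ for $y\in A$, and I set $n_k(y)=\mu n\,[\diam\overline{U_n}<2^{-k}\wedge y\in D^{(n)}]$ (and $n_k(y)=0$ if no such $n$ exists). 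This $n_k$ is Borel, and for $y\in A$ a witnessing $n$ exists: the basic open sets of diameter $<2^{-k}$ cover $P$, so $P_y\cap G_k(y)$ is a countable union of the sets $P_y\cap G_k(y)\cap\overline{U_n}$ over such $n$, and since $P_y\cap G_k(y)\notin I_y$ (inductive hypothesis; base case~(ii)) and $I_y$ is a $\sigma$-ideal, one of these sets is not in $I_y$.

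Finally I would read off the uniformization. For $y\in A$ the sets $P_y\cap G_k(y)$ are nonempty, closed, decreasing, with diameters tending to $0$, so their intersection is a single point $s(y)\in P_y$, and hence $\bigcap_k\overline{U_{n_k(y)}}=\{s(y)\}$ too. Put
$$P^{*}=\Bigl\{(y,x)\in P:\ (y,x)\in\textstyle\bigcap_{k}\overline{U_{n_k(y)}}\Bigr\}.$$
Since each $n_k$ is total Borel and each $\overline{U_m}$ is closed in $P$, $P^{*}$ is Borel; its section over $y\in A$ is a singleton and over $y\notin A$ is empty (any $(y,x)\in P$ has $y\in A$), so $P^{*}$ is a uniformization of $P$ with $\operatorname{proj}_Y P^{*}=A$. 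Being Borel with singleton sections, $P^{*}$ has Borel projection by the Lusin--Souslin theorem \cite[Theorem~15.1]{Kec}, which gives both the Borel uniformization and the Borelness of $A$. The step I expect to be the real obstacle is exactly the one emphasized above: by~(i) the ``$I_y$-largeness'' predicate is $\mathbf{\Delta}^1_1$ only \emph{relative to the a priori non-Borel set $A$}, so one must interpose $\mathbf{\Sigma}^1_1$-separation at each stage in order to run the recursion on genuinely Borel data; the rest is routine bookkeeping with nested closed sets of vanishing diameter.
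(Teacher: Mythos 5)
The paper does not prove this proposition itself; it is quoted verbatim from \cite[Theorem~18.6, 18.6*]{Kec}\cite{Kec2} and \cite[Lemma~3.7]{Kec Tarski}, and your argument is a correct reconstruction of that standard large-section uniformization proof: change of topology making $P$ Polish with closed fibres, a Borel recursion through basic sets of vanishing diameter kept $I_y$-positive via (ii), and the Lusin--Souslin theorem at the end. In particular you correctly identify the one delicate point (which is precisely what the correction 18.6* addresses): condition (i) only makes the largeness predicate ${\bf \Delta}^1_1$ relative to the a priori merely analytic set $A$, so ${\bf \Sigma}^1_1$-separation must be interposed at each stage of the recursion to obtain genuinely Borel selection data.
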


\begin{proof}[Proof of Theorem~\ref{th:main1}]
Suppose that $E^X_G$ satisfies (E).
We show that $E^X_G$ is Borel and satisfies (C).

Let $\varphi:X\to Y$ be as in (E).
Define $F=(\varphi^{-1}\times \varphi^{-1})(=_Y)$, i.e., $(x,y)\in F$ if and only if $\varphi(x)=\varphi(y)$.
Then it follows from (E) that $F$ is a Borel equivalence relation and every $E^X_G$-class contains at most countably many $F$-classes.
By Proposition~\ref{pr:2} we have that $E^X_G$ is Borel and by Proposition~\ref{pr:1} we find a Borel $G$-lg comeager set $C\subseteq X$ such that $G(x,C\cap [x]_F)$ is relatively open in $G(x,C)$ for every $x\in C$.

Next we want to apply Proposition~\ref{pr:uni}.
Define $P\subseteq Y\times X$ as $P=\{(\varphi(x),x):x\in C\}$, $A=\operatorname{proj}_Y(P)$ and the assignment $\varphi(x)\in A\mapsto I_{\varphi(x)}$ as 
$$B\in I_{\varphi(x)} \ \Leftrightarrow \ G(x,B\cap C\cap [x]_F)\in \mathcal{M}_G$$
where $x\in C$ and $B\subseteq C\cap [x]_F$.

We verify the assumptions of Proposition~\ref{pr:uni}.
It is easy to see that $P$ is a Borel set because it is just the reversed graph of the Borel function $\varphi\upharpoonright C:C\to Y$.
Let $x,y\in C$ such that $(x,y)\in F$, i.e., $\varphi(x)=\varphi(y)$.
Especially, there is $g\in G$ such that $g\cdot x=y$.
Let $B\subseteq C\cap [x]_F$.
Note that $G(y,B\cap C\cap [x]_F)\cdot g=G(x,B\cap C\cap [x]_F)$.
This implies that the assignment $\varphi(x)\in A\mapsto I_{\varphi(x)}$ is well-defined and it is easy to see that $I_{\varphi(x)}$ is an $\sigma$-ideal of subsets of $P_{\varphi(x)}$ for every $x\in C$.
Moreover, since $V\cdot x\cap C\subseteq C\cap [x]_F=P_{\varphi(x)}$ for some open set $1_G\in V\subseteq G$ and $C$ is $G$-lg comeager we have that $P_{\varphi(x)}\not\in I_{\varphi(x)}$ for every $x\in C$.
It remains to show that (ii) in Proposition~\ref{pr:uni} holds as well.
To this end pick a Borel set $R\subseteq P$.
Define the set $R'$ as
$$R'=\{(r,s)\in X\times X:r,s\in C \ \& \ (\varphi(r),s)\in R\}.$$
Note that $R'$ is Borel and we have $R'_{r_0}=R'_{r_1}$ whenever $r_0,r_1\in C$ such that $(r_0,r_1)\in F$.
Then for $r\in C$ we have
\begin{equation}
\tag{*}
R_{\varphi(r)}\in I_{\varphi(r)} \ \Leftrightarrow \ G(r,R_{\varphi(r)}\cap C\cap [r]_F)\in \mathcal{M}_G \ \Leftrightarrow \ G(r,R'_r)\in \mathcal{M}_G
\label{star}
\end{equation}
because $R_{\varphi(r)}=R'_r\subseteq C\cap [r]_F$.
It follows from \cite[Theorem~16.1]{Kec} together with \eqref{star} that the sets
$$\mathcal{Z}_0=\{r\in C:G(r,R'_r)\in \mathcal{M}_G\} \ \& \ \mathcal{Z}_1=\{r\in C:G(r,R'_r)\not\in \mathcal{M}_G\}$$
are Borel and $F\upharpoonright C\times C$-invariant.
Set $S=\varphi(\mathcal{Z}_0)$ and $T=Y\setminus \varphi(\mathcal{Z}_1)$.
Then $S\subseteq Y$ is ${\bf \Sigma}^1_1$ and $T\subseteq Y$ is ${\bf \Pi}^1_1$ because $\varphi$ is a Borel map and the rest follows again from \eqref{star}.

Having verified the assumptions of Proposition~\ref{pr:uni}, we get that the set $A$ is Borel and there is a Borel map $f:A\to C$ such that $(y,f(y))\in P$ for every $y\in A$.
It is easy to see that $(f(y),f(z))\not \in F$ for every $y\not=z\in A$ because $\varphi(f(y))=y$ for every $y\in A$ by the definition of $P$.
Especially, $f$ is injective and $\varphi\circ f:A\to A$ is the identity on $A$.
It follows that $D=f(A)\subseteq C$ is a Borel countable complete section of $E^X_G$ and a transversal of the equivalence relation $F\upharpoonright C\times C$ on $C$.

Pick any decreasing sequence $\{V_n\}_{n\in \mathbb{N}}$ of open neighbourhoods of $1_G$ such that $\{1_G\}=\bigcap_{n\in \mathbb{N}} V_n$.
Define
$$B_n=\{x\in D:V_n\cdot x\cap C\subseteq [x]_F\}.$$
We claim that $\{B_n\}_{n\in \mathbb{N}}$ and $\{V_n\}_{n\in \mathbb{N}}$ is the sequence from (C).
It follows from the fact that $G(x,C\cap [x]_F)$ is relatively open in $G(x,C)$ for every $x\in C$ together with the fact that $D\subseteq C$ that $D=\bigcup_{n\in \mathbb{N}}B_n$.
The definition of $B_n$ together with the fact that $D$ is a transversal of $F\upharpoonright C\times C$ implies that if $g\cdot x=y$ for some $g\in V_n$ and $x,y\in B_n$, then $x=y$.
It remains to show that $B_n$ is Borel for every $n\in \mathbb{N}$.
To see this first note that the set
$$C_n=\{(x,g)\in D\times V_n:(x,g\cdot x)\in F\}$$
is Borel because $F$ and $D$ are Borel sets.
Then we have
$$B_n=\{x\in D:(\forall^*g\in V_n) (x,g)\in C_n\}$$
and the set on the right-hand side is Borel by \cite[Theorem~16.1]{Kec}.
This finishes the proof.
\end{proof}

\section{Technical Proofs}

\begin{proof}[Proof of Proposition~\ref{pr:0}]
Define $D=\{(x,g)\in X\times G:g\cdot x\in C\}$.
Then $D$ is a Borel set, $\operatorname{proj}_X(D)=X$ and $D_x\not\in \mathcal{M}_G$ for every $x\in X$.
By \cite[Theorem~18.6]{Kec} or Proposition~\ref{pr:uni} there is a Borel function $f:X\to G$ such that $(x,f(x))\in D$ for every $x\in X$.
The function
$$F(x)=f(x)\cdot x$$
is the desired Borel reduction from $E^X_G$ to $E^C_G$. 
\end{proof}

\begin{proof}[Proof of Proposition~\ref{pr:1}]
Let $\{V_n\}_{n\in \mathbb{N}}$ be an open basis at $1_G$ made of symmetric sets such that $V_{n+1}\cdot V_{n+1}\subseteq V_n$.
Define
$$C=\{x\in X:(\exists n\in \mathbb{N})(\forall^* g\in V_n) \ (x,g\cdot x)\in F\}.$$
It follows from \cite[Theorem~16.1]{Kec} that $C$ is a Borel set.
Let $x\in C$ and $n\in \mathbb{N}$ such that $(x,g\cdot x)\in F$ for comeager many $g\in V_n$.
Take $g\in G(x,C)\cap V_{n+1}$.
Then we have that $V_{n+1}\cdot g\subseteq V_n$ and therefore $(x,h\cdot g\cdot x)\in F$ for comeager many $h\in V_{n+1}$.
By the choice of $g$ we have that $g\cdot x\in C$ and by the definition of $C$ we find $n'\in \mathbb{N}$ such that $(g\cdot x,h'\cdot g\cdot x)\in F$ for comeager many $h'\in V_{n'}$.
This shows that $(x,g\cdot x)\in F$ and as a consequence that $G(x,C\cap [x]_F)$ is relatively open in $G(x,C)$.

It remains to show that $C$ is $G$-lg comeager in $G$.
Suppose that there is $x\in X$ such that $G(x,C)$ is not comeager.
By \cite[Theorem~8.26]{Kec} there is an open set $U\subseteq G$ such that $G(x,C)$ is meager in $U$.
Let $\{\mathfrak{f}_{i}\}_{i\in \mathbb{N}}$ be an enumeration of the $F$-classes that are subset of $[x]_{E^X_G}$.
Define $D_i=G(x,\mathfrak{f}_i)$.
It follows that $D_i$ has the Baire property for every $i\in \mathbb{N}$ and that $U\subseteq \bigcup_{i\in \mathbb{N}} D_i$.
Another use of \cite[Theorem~8.26]{Kec} gives an open set $V\subseteq U$ and $i\in \mathbb{N}$ such that $D_i$ is comeager in $V$.
In another words $h\cdot x\in \mathfrak{f}_i$ for comeager many $h\in V$.
Pick $g\in (V\cap D_i)\setminus G(x,C)$ and $n\in \mathbb{N}$ such that $V_n\cdot g\subseteq V$.
Then we have that $D_i$ is comeager in $V_n\cdot g$ and $g\cdot x\in \mathfrak{f}_i$.
This shows that there are comeager many $h\in V_n$ such that $(g\cdot x,h\cdot g\cdot x)\in F$.
That is a contradiction with $g\not\in G(x,C)$ and that finishes the proof.
\end{proof}

\begin{proof}[Proof of Proposition~\ref{pr:2}]
Let $C\subseteq X$ be as in Proposition~\ref{pr:1}.
We claim that
\begin{equation}\label{eq:Countable classes implies Borel}
(x,y)\in E^C_G \ \Leftrightarrow \ (\exists^* (a,b)\in G\times G) \ (a\cdot x,b\cdot y)\in F
\end{equation}
for every $x,y\in C$.

Let $x,y\in C$.
If $x,y$ satisfies the right-hand side of \eqref{eq:Countable classes implies Borel}, then $(x,y)\in E^C_G$ because $F\subseteq E^X_G$.
Suppose, on the other hand, that $(x,y)\in E^C_G$.
By the definition of $E^C_G$ and $C$ we find an open set $1_G\in V\subseteq G$ and $g\in G$ such that $g\cdot x=y$ and $V\cdot y\cap C\subseteq [y]_F\cap C$.
Note that $W=G(y,V\cdot y\cap C)=V\cap G(y,C)$ is nonmeager and $a\cdot y\in [y]_F$ for every $a\in W$.
The set $W\cdot g\times W$ is nonmeager in $G\times G$.
Let $(a\cdot g,b)\in W\cdot g\times W$.
Then we have $a\cdot g\cdot x=a\cdot y\in [y]_F\cap C$ and $b\cdot y\in [y]_F\cap C$ by the definition of $W$.
This shows that $x,y$ satisfies the right-hand side of \eqref{eq:Countable classes implies Borel}.

It remains to show that the right-hand side of \eqref{eq:Countable classes implies Borel} defines a Borel set.
The set
$$R=\{(x,y,g,h)\in C\times C\times G\times G:(g\cdot x,h\cdot y)\in F\}$$
is Borel because $F$ is a Borel equivalence relation and $C$ is a Borel set.
This implies by \cite[Theorem~16.1]{Kec} that $E^C_G$ is a Borel equivalence relation and Proposition~\ref{pr:0} finishes the proof.
\end{proof}

\end{document}